\newtheorem{thm}{Theorem}[section]
\newtheorem{proposition}[thm]{Proposition}
\newtheorem{corollary}[thm]{Corollary}
\newtheorem{definition}[thm]{Definition}
\newtheorem{theorem}[thm]{Theorem}
\newtheorem{lemma}[thm]{Lemma}
\newtheorem{remark}[thm]{Remark}
\title{Expanding solutions of quasilinear parabolic equations}
\author{Nikolaos Roidos}
\address{Department of Mathematics, University of Patras, 26504 Rio Patras, Greece}
\email{roidos@math.upatras.gr}
\begin{document}

\date{\today}
\subjclass[2010]{35C20; 35K59; 35K65; 35K91; 35R01; 76S99}
\thanks{The author was supported by Deutsche Forschungsgemeinschaft, grant SCHR 319/9-1}

\begin{abstract}
By using the theory of maximal $L^{q}$-regularity and methods of singular analysis, we show a Taylor's type expansion--with respect to the geodesic distance around an arbitrary point--for solutions of quasilinear parabolic equations on closed manifolds. The powers of the expansion are determined explicitly by the local geometry, whose reflection to the solutions is established through the local space asymptotics.
\end{abstract}

\maketitle

\section{Introduction}

Let $X_{1}\overset{d}{\hookrightarrow} X_{0}$ be a continuously and densely injected complex Banach couple, $q\in(1,\infty)$, $U$ an open subset of the real interpolation space $(X_{1},X_{0})_{\frac{1}{q},q}$, $A(\cdot): U\rightarrow \mathcal{L}(X_{1},X_{0})$ a family of bounded operators and $F(\cdot,\cdot): U\times [0,T_{0}]\rightarrow X_{0}$, for some $T_{0}>0$. Consider the problem
\begin{eqnarray}\label{aqpp1}
u'(t)+A(u(t))u(t)&=&F(u(t),t)+G(t),\quad t\in(0,T),\\\label{aqpp2}
u(0)&=&u_{0},
\end{eqnarray}
where $T\in(0,T_{0}]$, $u_{0}\in U$ and $G\in L^{q}(0,T_{0};X_{0})$. Moreover, denote by $W^{1,q}$ the usual Sobolev space and let the following assumptions:\\
{\bf (H1)}{\em There exists a $T\in(0,T_{0})$ and a $u\in W^{1,q}(0,T;X_{0})\cap L^{q}(0,T;X_{1})$ solving \eqref{aqpp1}-\eqref{aqpp2}.}\\
{\bf (H2)} {\em $A(u(\cdot))\in C([0,T];\mathcal{L}(X_{1},X_{0}))$.}\\
{\bf (H3)} {\em $X_{0}$ is UMD and there exists a $\theta\in(\frac{\pi}{2},\pi)$ with the following property: for each $\xi\in [0,T]$ there exists a $c\geq 0$ such that the operator $A(u(\xi))+c: X_{1}\rightarrow X_{0}$ is $R$-sectorial of angle $\theta$.}\\
{\bf (H4)} $A(\cdot)$ is constant, i.e. the equation \eqref{aqpp1} is semilinear, $G=0$ and $F(u(\cdot),\cdot)\in C([0,T];(X_{0},X_{1})_{\phi,p})$ for some $\phi\in(0,1)$ and $p\in(1,\infty)$.\\
Denote
$$
\Lambda_{r}(\phi)=\{r+\lambda \,|\, \lambda\in\mathbb{C}\backslash\{0\}, |\arg(\lambda)|<\phi\}, \quad r\geq0, \quad \phi\in(0,\pi].
$$
Furthermore, if $Y$ is a Banach space and $\Omega\subseteq\mathbb{C}$ is an open set, denote by $\mathcal{A}(\Omega;Y)$ the space of $Y$-valued functions that are analytic in $\Omega$. In the first part of this paper we show the following:

\begin{theorem}\label{expth}
Let the assumptions {\em(H1)}, {\em(H2)}, {\em(H3)} be satisfied and let the family 
\begin{eqnarray}\nonumber
\lefteqn{[0,T)\ni \tau\mapsto v_{\tau}(\cdot)=e^{(\tau-(\cdot))A(u(\tau))}u(\tau)\in W^{1,q}(\tau,T;X_{0})\cap L^{q}(\tau,T;X_{1})}\\\label{vtaufamily}
&&\cap \, C([\tau,T];(X_{1},X_{0})_{\frac{1}{q},q})\cap \bigcap_{k\in\mathbb{N}}\mathcal{A}(\Lambda_{\tau}(\theta-\pi/2);\mathcal{D}((A(u(\tau)))^{k})),
\end{eqnarray}
where 
$$
\Lambda_{0}(\theta-\pi/2)\cup\{0\}\ni z\rightarrow e^{-zA(u(\tau))}\in\mathcal{L}(X_{0})
$$
is the analytic semigroup generated by $-A(u(\tau))$. Then, for any $0\leq t_{1}<t_{2}\leq T$, we have
\begin{gather}\label{inequv}
\|u-v_{t_{1}}\|_{W^{1,q}(t_{1},t_{2};X_{0})\cap L^{q}(t_{1},t_{2};X_{1})}+\|u-v_{t_{1}}\|_{C([t_{1},t_{2}];(X_{1},X_{0})_{\frac{1}{q},q})}<C f(t_{1},t_{2}),
\end{gather}
where 
$$
f(t_{1},t_{2})=\|A(u(t_{1}))u(\cdot)-A(u(\cdot))u(\cdot)+F(u(\cdot),\cdot)+G(\cdot)\|_{L^{q}(t_{1},t_{2};X_{0})}<\infty,
$$
and $C\geq1$ is only determined by $u$, $T$ and $q$; in particular for each $\varepsilon>0$ there exists some $\delta>0$ such that $t_{2}-t_{1}<\delta$ implies $f(t_{1},t_{2})<\varepsilon$.\\ 
If in addition {\em (H4)} is satisfied, then 
\begin{gather}\label{contreg}
u\in C^{1}((0,T];X_{0})\cap C((0,T];X_{1})
\end{gather}
and for each $\alpha\in(0,\phi)$ we also have 
\begin{gather}\label{diffineq}
\|(u-v_{t_{1}})(t_{2})\|_{X_{1}}<M(t_{2}-t_{1})^{\alpha}\|F(u(\cdot),\cdot)\|_{C([t_{1},t_{2}];(X_{0},X_{1})_{\phi,p})},
\end{gather}
for certain $M>0$ that only depends on $\phi$, $p$ and $\alpha$.
\end{theorem}

The assumption (H1) above is the existence of a {\em maximal $L^{q}$-regular} solution of \eqref{aqpp1}-\eqref{aqpp2}, i.e. a classical solution in the $L^{q}$-setting; we refer to \cite[Theorem 2.1]{CL} and \cite[Theorem 5.1.1]{PrSi} for this property. The assumption (H3) is the {\em maximal $L^{q}$-regularity} property for each of the operators $A(u(\xi))$, $\xi\in[0,T]$, i.e. the fact that an abstract first order linear non-homogeneous Cauchy problem for $A(u(\xi))$ has a classical solution as in (H1), see Section 2 for details.

Theorem \ref{expth} simply states that under suitable maximal $L^q$-regularity assumptions, after any time $t_{1}$ the solution $u$ of \eqref{aqpp1}-\eqref{aqpp2} can be approximated by the analytic solution of the abstract homogeneous {\em ``heat equation"} obtained by the $u(t_{1})$-linearization of the quasilinear operator. The approximation can take place around any $t\in(0,T)$ and the error term can become arbitrary small in the maximal $L^q$-regularity space norm uniformly in $t$ by choosing the time step $t_{2}-t_{1}$ sufficiently small. 

Although an approximation of $u$ with similar regularity as in \eqref{vtaufamily} can be obtained by a mollification sequence and a density argument, in our case $v_{t_{1}}(\cdot)$ is analytic and satisfies $v_{t_{1}}(t_{1})=u(t_{1})$. Hence, in particular, we have the following:

\begin{corollary}\label{aproxfinitpoints}
Let the assumptions {\em(H1)}, {\em(H2)}, {\em(H3)} be satisfied and let the family $v_{\tau}$, $\tau\in[0,T)$, from \eqref{vtaufamily}. For every $\varepsilon>0$ there exists an $N\in\mathbb{N}$ such that for each finite set of points $t_{1}<...<t_{n}$, $n\geq N$, in $[0,T]$ containing $\{\frac{kT}{N}\}_{k\in\{0,...,N\}}$ and the function $v:[0,T]\rightarrow X_{0}$ defined by $v(t)=v_{t_{j}}(t)$, $t\in[t_{j},t_{j+1})$, $j\in\{1,...,n-1\}$, the following holds:\\
{\bf (i)} $v$ restricted to each $[t_{j},t_{j+1})$, $j\in\{1,...,n-1\}$, satisfies the regularity \eqref{vtaufamily} with $\tau=t_{j}$.\\
{\bf (ii)} If $u$ is the solution of \eqref{aqpp1}-\eqref{aqpp2} according to the assumption {\em(H1)}, then 
$$
\|u-v\|_{W^{1,q}(t_{j},t_{j+1};X_{0})\cap L^{q}(t_{j},t_{j+1};X_{1})}+\|u-v\|_{C([t_{j},t_{j+1}];(X_{1},X_{0})_{\frac{1}{q},q})}<\varepsilon
$$
for each $j\in\{1,...,n-1\}$. If in addition {\em (H4)} is satisfied, then
$$
\|(u-v)(t)\|_{X_{1}}<M(t-t_{j})^{\alpha}\|F(u(\cdot),\cdot)\|_{C([t_{j},t];(X_{0},X_{1})_{\phi,p})}, \quad t\in[t_{j},t_{j+1}),
$$ 
for each $j\in\{1,...,n-1\}$, where $M$ is as in \eqref{diffineq}. \\
{\bf (iii)} $v(t_{j})=u(t_{j})$ for each $j\in\{1,...,n-1\}$.
\end{corollary}

The study of linearizations is one of the central ideas in the theory of partial differential equations. In order to avoid the large amount of literature, by focusing on the functional analytic approach to evolution equations, we mention certain examples where the linearization has been used. Concerning abstract non-autonomous linear parabolic problems we refer to \cite[Chapter II]{Am}, \cite[Chapter 6]{Lun} and \cite[Chapter 5]{Tan}. For general quasilinear parabolic problems we refer to \cite{Am3}, \cite{CL}, \cite{PrSi} and \cite[Chapter 6]{Tan}. In the above examples, the linearization was employed in order to obtain existence, uniqueness and regularity results for the solutions, e.g. for the construction of the evolution operator, for setting a Banach fixed point argument, for showing better regularity or global existence for the solution etc. More precisely, the solution of the linearized problem was considered as an approximation of the original solution in the following two ways: (i) in a finite set of time steps $t\in [0,T]$, through the construction of the evolution operator in the case of abstract non-autonomous linear parabolic problems, (ii) in $t=0$, by determining the centre of a ball where the contraction mapping was defined through the Banach fixed point argument in the general quasilinear case. However, such an approximation was neither considered in every $t\in[0,T]$, nor uniformly in $t$. Moreover, it was not regarded as a regularity improvement of the original solution and no results in this direction were obtained.

Besides its good regularity, we choose \eqref{expth} as an approximation due to the following reason. The maximal $L^{q}$-regularity space norm of the error term can be estimated around any $t\in(0,T)$ by using a standard maximal $L^{q}$-regularity inequality (see, e.g., \eqref{maxreqineq} below); i.e. by using the well posedness of the linearized problem due to (H3). Then, the continuity assumption (H2) turns out to be sufficient for this estimate to be uniform in $t$. This is the main difficulty in the above approach; it is equivalent to provide uniform estimates for several functional analytic constants and bounds related to our linearized family of operators (see Lemma \ref{MMA} below). For this reason, the theory of maximal $L^{q}$-regularity is extensively used through the document.

The question of whether an operator satisfies the property of maximal $L^{q}$-regularity was a long standing problem. We recall that if an operator satisfies this property, then necessarily it has to generate an analytic semigroup, see \cite[Theorem 2.2]{Dore}. First in \cite{DS} it was shown that in Hilbert spaces all generators of analytic semigroups have maximal $L^{q}$-regularity. Next, in \cite{DV} the result was extended to UMD Banach spaces (see Section 2 for the property of UMD) for operators having bounded imaginary powers. Then, in \cite{KaW} and \cite{Weis} a characterization of this property was given in UMD spaces through the $R$-sectoriality property for operators. We mention that the property of maximal $L^{q}$-regularity is deeply related to the old problem of the closedness of the sum of two closed operators, i.e. the question of whether the sum of two closed operators is again closed. For the commutative case this problem was treated first in \cite{DG}, and then in \cite{DV} and \cite{KaW}. For further details and an extensive study on the above two problems, we refer to \cite[Chapter III.4]{Am}, \cite[Chapter II]{PrSi} and to the references therein.

Next we focus on manifolds with isolated conical singularities. Such a space is a smooth compact $(n+1)$-dimensional manifold $\mathcal{B}$, $n\ge 1$, with closed possibly disconnected smooth boundary $\partial\mathcal{B}$ of dimension $n$. $\mathcal{B}$ is endowed with a degenerate Riemannian metric $g$ which in a collar neighborhood $[0,1)\times\partial\mathcal{B}$ of the boundary is of the form
\begin{gather}\label{metric}
g= dx^2+x^2h(x),
\end{gather}
where $[0,1)\ni x\mapsto h(x)$ is a smooth up to $x=0$ family of non-degenerate Riemannian metrics on the cross section $\partial\mathcal{B}$. The boundary $\{0\}\times \partial\mathcal{B}$ of $\mathcal{B}$ corresponds to the conical tips. We denote $\mathbb{B}=(\mathcal{B},g)$, $\partial\mathbb{B}=(\partial\mathcal{B},h(0))$ and $\partial\mathcal{B}=\cup_{k=1}^{\ell}\partial\mathcal{B}_{k}$, for certain $\ell\in\mathbb{N}$, where each of $\partial\mathcal{B}_{k}$ is closed, smooth and connected. Moreover, let $\mathbb{B}^{\circ}$ be the interior of $\mathbb{B}$.

The naturally appearing differential operators on $\mathbb{B}$, including the Laplacian, belong to the class of {\em cone differential operators}, which is a particular class of degenerate differential operators on $\mathcal{B}$. If $A$ is such an operator, we consider closed extensions of $A$ in {\em weighted Mellin-Sobolev spaces} $\mathcal{H}_{p}^{s,\gamma}(\mathbb{B})$, $p\in(1,\infty)$, $s,\gamma\in\mathbb{R}$, i.e. in the cone analogous of the usual Sobolev spaces. In the situation of $A$ being $\mathbb{B}$-elliptic, the domain of its maximal extension $\mathcal{D}(\underline{A}_{s,\max})$ differs from the domain of its minimal extension (i.e. its closure) $\mathcal{D}(\underline{A}_{s,\min})$ by an $(s,p)$-independent finite dimensional space $\mathcal{E}_{A,\gamma}$, i.e. we have 
\begin{gather}\label{dommaxexp}
\mathcal{D}(\underline{A}_{s,\max})=\mathcal{D}(\underline{A}_{s,\min})\oplus\mathcal{E}_{A,\gamma},
\end{gather}
see Section 3 for details. Hence, in general we have several closed extensions.

For the minimal domain we have $\mathcal{D}(\underline{A}_{s,\min})\hookrightarrow \mathcal{H}_{p}^{s+\mu,\gamma+\mu-\varepsilon}(\mathbb{B})$ for all $\varepsilon>0$, where $\mu$ stands for the order of the operator. The space $\mathcal{E}_{A,\gamma}$, called {\em asymptotics space}, consists of linear combinations of $C^{\infty}(\mathbb{B}^{\circ})$-functions that vanish on $\mathcal{B}\backslash([0,1)\times\partial\mathcal{B})$ and in local coordinates $(x,y)$ on the collar part $(0,1)\times\partial\mathcal{B}$ they are of the form $c(y)\omega(x)x^{-\rho}\log^{\eta}(x)$, where $c\in C^{\infty}(\partial\mathbb{B})$, $\omega:[0,1)\rightarrow [0,1]$ is a fixed cut-off function near zero, $\rho\in \mathbb{C}$ and $\eta\in\mathbb{N}\cup\{0\}$. Here the exponents $\rho$ and $\eta$ are determined explicitly by the coefficients of $A$ near the singularities, for instance if $A$ is the Laplacian induced by $g$, then they are only determined by the family $h(\cdot)$ and if in addition $h(\cdot)=h$ is constant, then they are only determined by the spectrum of the Laplacian on $\partial\mathbb{B}$. Hence, in particular, we have the following:

\begin{remark}\label{Remarkexp}
{\bf (i)} For each $k\in\mathbb{N}$ and each $\varepsilon>0$ we have $\mathcal{D}(\underline{A}_{s,\min}^{k})\hookrightarrow \mathcal{H}_{p}^{s+k\mu,\gamma+k\mu-\varepsilon}(\mathbb{B})$, so that, due to standard Mellin-Sobolev embeddings and under appropriate choice of the weight $\gamma$, elements in $\mathcal{D}(\underline{A}_{s,\min}^{k})$ vanish on the conical tips like $x^{\gamma+k\mu-\frac{n+1}{2}-\varepsilon}$. On the other hand the elements of the basis of $\mathcal{E}_{A^{k},\gamma}$ increase in number as $k$ increases: more terms, which are of the form $c(y)x^{-\rho}\log^{\eta}(x)$ close to the tips with smaller $\mathrm{Re}(\rho)$, are added. In the case of $A$ being the Laplacian on $\mathbb{B}$, we refer to \eqref{DD22} for a detailed description of the above integer powers domains.\\
Moreover, recall the following three facts: \\
{\bf (ii)} Under appropriate choice of the weight $\gamma$ we can have $R$-sectoriality for a non-minimal closed extension of a cone differential operator, see, e.g., Theorem \ref{RsecD} below for the case of the Laplacian on $\mathbb{B}$. In particular, in this case the domains of the integer powers of the operator will be as in {\em (i)}. \\
{\bf (iii)} By choosing geodesic polar coordinates we can regard a differential operator on a smooth closed manifold without boundary as a cone differential operator on a conic manifold $\mathbb{B}$, see Section 4.3. \\
{\bf (iv)} When we apply Theorem \ref{expth} or Corollary \ref{aproxfinitpoints} to a problem of the form \eqref{aqpp1}-\eqref{aqpp2} defined on a conic manifold, by \eqref{vtaufamily} the approximation of the solution $u$ on $[t_{1},t_{2})$ belongs to the domain of the $k$-th power of the $u(t_{1})$-linearization of the quasilinear operator, for each $k\in\mathbb{N}$. As a consequence, when this linearization is a cone differential operator, the approximation can be expressed as in {\em(i)}. The same result applies to smooth closed manifolds without boundary due to {\em(iii)}.
\end{remark}

The Remark \ref{Remarkexp} above combined with Theorem \ref{expth} or Corollary \ref{aproxfinitpoints}, provides locally in space and time expansions for the solutions of quasilinear parabolic equations on conic manifolds or on smooth closed manifolds. These expansions consist of three terms. The first term is an asymptotics space $\mathcal{E}_{A^{k},\gamma}$ part, i.e a sum of terms of the form $c(y)x^{-\rho}\log^{\eta}(x)$, which can be chosen arbitrary long. The parameter $x$ here stands for the geodesic distance. The second term, induced by a $\mathcal{D}(\underline{A}_{s,\min}^{k})$ term, belongs to a Mellin-Sobolev space and close to the conical tips (or close to the points where the expansion takes place) it decays to zero faster than each of the non-constant summands of the previous term. The third term is a remainder appearing by the nonlinearity and can be chosen arbitrary small with respect to the maximal $L^{q}$-regularity norm and in particular with respect to the $C^{0}$-norm, due to \eqref{inequv} or \eqref{diffineq}.

The above expansion provides information on the asymptotic behavior of the solutions close to the singularities or close to an arbitrary point where polar coordinates are chosen. Besides this, the reflection of the local geometry to the evolution is established through the exponents $\rho$, $\eta$ and the choice of the weight $\gamma$. As an application, in Section 4 we consider two nonlinear problems on conic manifolds, namely, the porous medium equation and the Swift-Hohenberg equation. In each of these cases we obtain the expansion described above and provide a comprehensive description of the asymptotics space part in terms of the family of metrics $h(\cdot)$.

\section{Decomposing the solutions of the abstract quasilinear parabolic problem}

We start with some elementary facts from the linear theory. Recall that $X_{1}\overset{d}{\hookrightarrow} X_{0}$ denotes a continuously and densely injected complex Banach couple. Moreover, recall that a linear operator $A$ in $X_{0}$ generates an analytic semigroup if and only if there exists a positive shift of $A$ that is {\em sectorial} of angle $\pi/2$, see, e.g., \cite[Corollary 3.7.17]{ABHN}. 

\begin{definition}[Sectorial operators]\label{Sec}
Let $\mathcal{P}(K,\theta)$, $\theta\in[0,\pi)$, $K\geq1$, be the class of all closed densely defined linear operators $A$ in $X_{0}$ such that 
$$
S_{\theta}=\{\lambda\in\mathbb{C}\,|\, |\arg(\lambda)|\leq\theta\}\cup\{0\}\subset\rho{(-A)} \quad \mbox{and} \quad (1+|\lambda|)\|(A+\lambda)^{-1}\|_{\mathcal{L}(X_{0})}\leq K
$$
when $ \lambda\in S_{\theta}$. The elements in $\mathcal{P}(\theta)=\cup_{K\geq1}\mathcal{P}(K,\theta)$ are called {\em (invertible) sectorial operators of angle $\theta$}. If $A\in\mathcal{P}(\theta)$ then any $K\geq1$ such that $A\in \mathcal{P}(K,\theta)$ is called {\em sectorial bound of $A$}, and we denote $K_{A,\theta}=\inf\{K\, |\, A\in \mathcal{P}(K,\theta)\}$ depending on $A$ and $\theta$. 
\end{definition}

Consider the linear problem
\begin{eqnarray}\label{app1}
u'(t)+Au(t)&=&f(t), \quad t\in(0,T),\\\label{app2}
u(0)&=&u_{0},
\end{eqnarray}
where $T>0$, $u_{0}\in(X_{1},X_{0})_{\frac{1}{q},q}$, $q\in(1,\infty)$, $f\in L^{q}(0,T;X_{0})$ and $-A:\mathcal{D}(A)=X_{1}\rightarrow X_{0}$ is the infinitesimal generator of an analytic semigroup on $X_{0}$. We say that $A$ has {\em maximal $L^q$-regularity} if for any $u_{0}\in(X_{1},X_{0})_{\frac{1}{q},q}$ and any $f\in L^{q}(0,T;X_{0})$ there exists a unique $u\in W^{1,q}(0,T;X_{0})\cap L^{q}(0,T;X_{1})$ solving \eqref{app1}-\eqref{app2}. In this case, $u$ also depends continuously on $u_{0}$ and $f$, see, e.g., \cite[(2.2)]{CL}. Furthermore, the above property is independent of $T$ and $q$, see \cite[Theorem 2.5 and Theorem 4.2]{Dore}, and for any $0\leq\tau_{1}<\tau_{2}\leq T<\infty$ the following embedding holds
\begin{gather}\label{intemb}
W^{1,q}(\tau_{1},\tau_{2};X_{0})\cap L^{q}(\tau_{1},\tau_{2};X_{1})\hookrightarrow C([\tau_{1},\tau_{2}];(X_{1},X_{0})_{\frac{1}{q},q}),
\end{gather}
see, e.g., \cite[Theorem III.4.10.2]{Am}; note that if we restrict to the subspace of $W^{1,q}(\tau_{1},\tau_{2};X_{0})\cap L^{q}(\tau_{1},\tau_{2};X_{1})$ consisting of functions $u$ satisfying $u(0)=0$, then the norm of the above embedding is independent of $\tau_{1},\tau_{2}\in[0,T]$, see \cite[Corollary 2.3]{CL}. In addition, in order an operator $A$ to have maximal $L^{q}$-regularity, it is sufficient to satisfy the above property for $u_{0}=0$ only. Finally, note that after replacing $u$ with $e^{ct}u$ in \eqref{app1}-\eqref{app2}, we can insert a positive shift $c>0$ to the operator $A$. 

By integrating \eqref{app1} from $0$ to $t\in(0,T]$, we can easily verify that if $A$ has maximal $L^{q}$-regularity, then $u$ is also a {\em mild solution} to \eqref{app1}-\eqref{app2} in the sense of \cite[Section 3.1]{ABHN}. Therefore, if $\{e^{-tA}\}_{t\geq 0}$ is the semigroup generated by $-A$, then the solution is expressed by the {\em variation of constants formula} as
\begin{gather}\label{solution}
u(t)=e^{-tA}u_{0}+\int_{0}^{t}e^{(s-t)A}f(s)ds, \quad t\in[0,T],
\end{gather}
see, e.g., \cite[Proposition 3.1.16]{ABHN}. Moreover, recall that for any $k\in\mathbb{N}$, $k\geq2$, the integer powers $A^{k}:\mathcal{D}(A^{k})\rightarrow X_{0}$ of $A$ satisfy 
\begin{gather}\label{DAk}
\mathcal{D}(A^{k})=\{x\in \mathcal{D}(A^{k-1})\, |\, Ax\in \mathcal{D}(A^{k-1})\} \quad \text{with} \quad \|\cdot\|_{\mathcal{D}(A^{k})}=\sum_{j=0}^{k}\|A^{j}\cdot\|_{X_{0}},
\end{gather}
where $A^{0}=I$.

Since all Banach spaces we consider in the sequel belong to the class of UMD, i.e. they satisfy the {\em unconditionality of martingale differences property}, see \cite[Section III.4.4]{Am}, we recall the following boundedness condition for the resolvent of an operator, which is related to the maximal $L^{q}$-regularity property in these spaces.

\begin{definition}[$R$-sectorial operators]\label{RSec}
Denote by $\mathcal{R}(K,\theta)$, $\theta\in[0,\pi)$, $K\geq1$, the class of all operators $A\in \mathcal{P}(\theta)$ in $X_{0}$ such that for any choice of $\lambda_{1},...,\lambda_{N}\in S_{\theta}\backslash\{0\}$ and $x_{1},...,x_{N}\in X_0$, $N\in\mathbb{N}$, we have
$$
\Big\|\sum_{k=1}^{N}\epsilon_{k}\lambda_{k}(A+\lambda_{k})^{-1}x_{k}\Big\|_{L^{2}(0,1;X_0)} \leq K \Big\|\sum_{k=1}^{N}\epsilon_{k}x_{k}\Big\|_{L^{2}(0,1;X_0)},
$$
where $\{\epsilon_{k}\}_{k\in\mathbb{N}}$ is the sequence of the Rademacher functions. The elements in $\mathcal{R}(\theta)=\cup_{K\geq1}\mathcal{R}(K,\theta)$ are called {\em $R$-sectorial operators of angle $\theta$}. If $A\in \mathcal{R}(\theta)$ then any $K\geq1$ such that $A\in \mathcal{R}(K,\theta)$ is called {\em $R$-sectorial bound of $A$}, and we denote $R_{A,\theta}=\inf\{K\, |\, A\in \mathcal{R}(K,\theta)\}$ depending on $A$ and $\theta$.
\end{definition} 

\noindent The following classical result holds. 

\begin{theorem}[{\rm Kalton and Weis, \cite[Theorem 6.5]{KaW}}]\label{KWth}
If $X_{0}$ is UMD and $A\in\mathcal{R}(\theta)$ in $X_{0}$ with $\theta>\frac{\pi}{2}$, then $A$ has maximal $L^{q}$-regularity. 
\end{theorem}

Let $\theta\in(\frac{\pi}{2},\pi)$, $c\geq0$ and $A:\mathcal{D}(A)=X_{1}\rightarrow X_{0}$ such that $A+c \in\mathcal{R}(\theta)$ with $R$-sectorial bound equal to $R$. If $X_{0}$ is UMD, then any solution $u$ of \eqref{app1}-\eqref{app2} satisfies 
\begin{eqnarray}\nonumber
\|u\|_{W^{1,q}(0,T;X_{0})\cap L^{q}(0,T;X_{1})}&=&\|u\|_{L^{q}(0,T;X_{0})}+\|u'\|_{L^{q}(0,T;X_{0})}+\|u\|_{L^{q}(0,T;X_{1})}\\\label{maxreqineq}
&\leq& e^{cT}M(\|f\|_{L^{q}(0,T;X_{0})}+\|u_{0}\|_{(X_{1},X_{0})_{\frac{1}{q},q}}),
\end{eqnarray}
for some constant $M\geq1$ that only depends on $\{A,c,R,\theta,q\}$, see, e.g., \cite[(2.11)]{Ro2} or \cite[(6.3)]{KaW}. Concerning the $\{A,c,R\}$-dependence of $M$, we have the following.

\begin{lemma}[Uniform boundedness]\label{MMA}
Let $q\in(1,\infty)$, $\theta\in(\frac{\pi}{2},\pi)$, $r>0$, $X_{0}$ be a UMD space and let $A(\cdot)\in C([0,r];\mathcal{L}(X_{1},X_{0}))$. Assume that for each $\xi\in [0,r]$ there exists a $c\geq0$ such that the operator $A(\xi)+c:X_{1}\rightarrow X_{0}$ is $R$-sectorial of angle $\theta$, and let $M\geq1$ satisfy \eqref{maxreqineq} with respect to $A(\xi)$. Then:\\
 {\bf (i)} There exists some $c_{0}\geq1$ such that for each $\xi\in[0,r]$ the operator $A(\xi)+c_{0}:X_{1}\rightarrow X_{0}$ is $R$-sectorial of angle $\theta$ with $R$-sectorial bound equal to $c_{0}$. \\
 {\bf (ii)} There exists some $M_{0}\geq1$ such that for each $\xi\in [0,r]$ we can choose $M\leq M_{0}$.\\
 {\bf (iii)} $c_{0}$ and $M_{0}$ only depend on $\{A(\cdot),r,\theta\}$ and $\{A(\cdot),r,\theta,q\}$ respectively.
\end{lemma}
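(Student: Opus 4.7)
The plan is a compactness argument on $[0,r]$ whose local ingredient is the perturbation stability of $R$-sectoriality under small perturbations in $\mathcal{L}(X_{1},X_{0})$. Fix $\xi_{0}\in[0,r]$ and pick, by hypothesis, $c_{\xi_{0}}\geq 0$ and $R_{\xi_{0}}\geq 1$ such that $B:=A(\xi_{0})+c_{\xi_{0}}$ is $R$-sectorial of angle $\theta$ with $R$-bound $R_{\xi_{0}}$. The local claim I want to establish is that there is a neighborhood $U_{\xi_{0}}\subset[0,r]$ and a constant, say $2R_{\xi_{0}}$, such that $A(\xi)+c_{\xi_{0}}$ is $R$-sectorial of angle $\theta$ with $R$-bound bounded by that constant for every $\xi\in U_{\xi_{0}}$.

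For this I factor
\[
A(\xi)+c_{\xi_{0}}+\lambda=(B+\lambda)\bigl(I+(B+\lambda)^{-1}(A(\xi)-A(\xi_{0}))\bigr)
\]
and observe that, via the identity $B(B+\lambda)^{-1}=I-\lambda(B+\lambda)^{-1}$ together with the description of $\|\cdot\|_{X_{1}}$ as the graph norm of $B$ (which is an isomorphism $X_{1}\to X_{0}$), the inverse $(B+\lambda)^{-1}$ is uniformly bounded in $\mathcal{L}(X_{0},X_{1})$ for $\lambda\in S_{\theta}$. Combined with the continuity of $A(\cdot)$, the right factor is a uniformly small perturbation of the identity in $\mathcal{L}(X_{1})$ for $\xi$ in a small enough $U_{\xi_{0}}$, hence invertible by Neumann series. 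To upgrade from a uniform resolvent bound to $R$-sectoriality I write out the Neumann series for $(A(\xi)+c_{\xi_{0}}+\lambda)^{-1}$ and apply the standard permanence properties of $R$-boundedness after the factorization
\[
(A(\xi)-A(\xi_{0}))(B+\lambda)^{-1}=(A(\xi)-A(\xi_{0}))B^{-1}\cdot\bigl[I-\lambda(B+\lambda)^{-1}\bigr],
\]
whose first factor has small $\mathcal{L}(X_{0})$-norm on $U_{\xi_{0}}$ and whose second factor is an $R$-bounded family by $R$-sectoriality of $B$.

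Given the local result, compactness of $[0,r]$ produces a finite subcover $U_{\xi_{1}},\dots,U_{\xi_{N}}$; I set $c_{*}=\max_{i}c_{\xi_{i}}$ and $R_{*}=\max_{i}2R_{\xi_{i}}$. To pass to a single shift I use that $S_{\theta}$ is a convex cone and that $|\lambda|/|\lambda+\tilde{c}|\leq 1/\sin\theta$ for $\lambda\in S_{\theta}\setminus\{0\}$ and $\tilde{c}\geq 0$, which implies that adding any nonnegative constant to an $R$-sectorial operator of angle $\theta\in(\frac{\pi}{2},\pi)$ preserves $R$-sectoriality of angle $\theta$ with $R$-bound multiplied by at most $1/\sin\theta$. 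Choosing $c_{0}\geq\max(c_{*},R_{*}/\sin\theta)$ then yields (i). Part (ii) follows by inspection of \eqref{maxreqineq}, since $M$ is controlled by $q$, $\theta$, the $R$-bound, and the sectorial bound given by $1+(1+R)(1+\|(A+c)^{-1}\|_{\mathcal{L}(X_{0})})$, all of which are now uniform in $\xi\in[0,r]$; (iii) is transparent from the construction. The main obstacle is the local step: the scalar resolvent bound under perturbation is routine, but retaining $R$-boundedness requires the factorization above, which pushes the $\lambda$-dependence of $(A(\xi)-A(\xi_{0}))(B+\lambda)^{-1}$ into the already $R$-bounded family $\{I-\lambda(B+\lambda)^{-1}\}_{\lambda\in S_{\theta}}$.
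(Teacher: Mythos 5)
Your proof of part (i) is correct and is in substance the same as the paper's: the paper also runs a compactness argument on $[0,r]$ (sequential, by contradiction, rather than via a finite subcover, which is immaterial) whose local ingredient is the Neumann series
$(A(\xi)+\tilde c+\lambda)^{-1}=(A(\tilde\xi)+\tilde c+\lambda)^{-1}\sum_{k\ge0}\bigl((A(\tilde\xi)-A(\xi))(A(\tilde\xi)+\tilde c+\lambda)^{-1}\bigr)^{k}$
under a smallness condition on $\|(A(\tilde\xi)-A(\xi))(A(\tilde\xi)+\tilde c)^{-1}\|_{\mathcal{L}(X_{0})}$, followed by the shift lemma (the paper cites \cite[Lemma 2.6]{RS3}, with the factor $1+\frac{2}{\sin\theta}$ rather than your $\frac{1}{\sin\theta}$; mind the extra factor from the contraction principle when you multiply an $R$-bounded family by the scalars $\lambda/(\lambda+\nu)$). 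Your factorization $(A(\xi)-A(\xi_{0}))(B+\lambda)^{-1}=(A(\xi)-A(\xi_{0}))B^{-1}\bigl[I-\lambda(B+\lambda)^{-1}\bigr]$, which pushes the $\lambda$-dependence into the already $R$-bounded family, is exactly the step needed to justify the $R$-bound $2R_{\xi_0}$ of the summed series; the paper asserts this bound without writing the factorization out, so you have actually supplied a detail it suppresses.

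Part (ii) is where you diverge, and where your argument is too thin as written. You dismiss (ii) "by inspection" on the grounds that $M$ is controlled by $q$, $\theta$, the $R$-bound and the sectorial bound. But the paper's inequality \eqref{maxreqineq} is only stated with $M$ fixed when the data $\{A,c,R,\theta,q\}$ are fixed, i.e.\ $M$ is a priori allowed to depend on the operator $A$ itself and not merely on its bounds; this is precisely why the paper proves (ii) by a separate argument. It sets $M_{\xi}=\inf\{M\}$, supposes $M_{\zeta_i}\to\infty$ along $\zeta_i\to\tilde\zeta$, writes the equation satisfied by $u_{\tilde\zeta}-u_{\zeta_i}$ with right-hand side $(A(\zeta_i)-A(\tilde\zeta))u_{\zeta_i}$, applies \eqref{maxreqineq} for the single operator $A(\tilde\zeta)$, and absorbs the term $e^{c_0T}M_{\tilde\zeta}\|A(\tilde\zeta)-A(\zeta_i)\|_{\mathcal{L}(X_1,X_0)}\|u_{\zeta_i}\|_{L^q(0,T;X_1)}$ once $i$ is large, contradicting the blow-up of $M_{\zeta_i}$. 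Your shortcut is salvageable if you invoke the quantitative form of the Kalton--Weis/Weis multiplier theorem in which the maximal regularity constant depends only on the $R$-sectoriality bound, the angle, $q$ and the UMD constant of $X_0$, but you must then say so explicitly and cite it; otherwise you should run the perturbation-of-the-evolution-equation argument above. Everything else, including (iii), is fine.
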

\begin{proof} 
Let $\xi\in[0,r]$ and $c\geq0$ such that $A(\xi)+c\in \mathcal{R}(\theta)$. Let $K_{A(\xi)+c,\theta}$ and $R_{A(\xi)+c,\theta}$ be as in Definition \ref{Sec} and Definition \ref{RSec} respectively. 
Moreover, for each $\xi\in[0,r]$ denote
$$
c_{\xi,\theta} = \inf\{\rho\geq0 \, | \, \text{$A(\xi)+\rho : X_{1}\rightarrow X_{0}$ is $R$-sectorial of angle $\theta$}\}.
$$ 
Assume that there exists a sequence $\{\xi_{j}\}_{j\in\mathbb{N}}$ in $[0,r]$ such that $c_{\xi_{j},\theta}\rightarrow\infty$ as $j\rightarrow\infty$. By possibly passing to a subsequence, we may assume that $\xi_{j}\rightarrow \widetilde{\xi}\in[0,r]$ as $j\rightarrow\infty$. Let $\widetilde{c}\geq0$ such that $A(\widetilde{\xi})+\widetilde{c}\in \mathcal{R}(\theta)$. Choose $\delta>0$ such that
\begin{eqnarray}\label{firstbound}
\|(A(\widetilde{\xi})-A(\xi))(A(\widetilde{\xi})+\widetilde{c})^{-1}\|_{\mathcal{L}(X_{0})}<\frac{1}{2}\min\Big\{\frac{1}{1+K_{A(\widetilde{\xi})+\widetilde{c},\theta}},\frac{1}{1+R_{A(\widetilde{\xi})+\widetilde{c},\theta}}\Big\}
\end{eqnarray}
whenever $|\widetilde{\xi}-\xi|<\delta$, $\xi\in [0,r]$. Then, for such $\xi$ by the formula
\begin{eqnarray}\label{secondbound}
(A(\xi)+\widetilde{c}+\lambda)^{-1}=(A(\widetilde{\xi})+\widetilde{c}+\lambda)^{-1}\sum_{k=0}^{\infty}\big((A(\widetilde{\xi})-A(\xi))(A(\widetilde{\xi})+\widetilde{c}+\lambda)^{-1}\big)^{k}
\end{eqnarray}
valid for any $\lambda\in S_{\theta}$, we deduce that $S_{\theta}\subset\rho(-(A(\xi)+\widetilde{c}))$, $A(\xi)+\widetilde{c}\in\mathcal{P}(\theta)$ with sectorial bound $2K_{A(\widetilde{\xi})+\widetilde{c},\theta}$ and also $A(\xi)+\widetilde{c} \in \mathcal{R}(\theta)$ with $R$-sectorial bound $2R_{A(\widetilde{\xi})+\widetilde{c},\theta}$, so that we get a contradiction. Therefore, the set $\cup_{\xi\in[0,r]}\{c_{\xi,\theta}\}$ is bounded. By noting that $B\in \mathcal{R}(\theta)$ and $\nu\geq0$ implies $B+\nu \in \mathcal{R}(\theta)$ (see, e.g., \cite[Lemma 2.6]{RS3}), we conclude that there exists some $c_{0}\geq1$ such that for each $\xi\in[0,r]$ we have $A(\xi)+c_{0}\in\mathcal{R}(\theta)$.

Assume that there exists a sequence $\{\tau_{j}\}_{j\in\mathbb{N}}$ in $[0,r]$ such that $R_{A(\tau_{j})+c_{0},\theta}\rightarrow \infty$ as $j\rightarrow\infty$. By possibly passing to a subsequence, we may assume that $\tau_{j}\rightarrow \widetilde{\tau}\in[0,r]$ as $j\rightarrow\infty$. Let $\varepsilon>0$ such that $|\widetilde{\tau}-\tau|<\varepsilon$, $\tau\in [0,r]$, implies \eqref{firstbound} with $\{\widetilde{\xi},\xi,\widetilde{c}\}$ replaced by $\{\widetilde{\tau},\tau,c_{0}\}$. For any $\lambda\in S_{\theta}$ and $\tau$ as before, by \eqref{secondbound} with $\{\widetilde{\xi},\xi,\widetilde{c}\}$ replaced by $\{\widetilde{\tau},\tau,c_{0}\}$ we infer that $S_{\theta}\subset\rho(-(A(\tau)+c_{0}))$, $A(\tau)+c_{0}\in\mathcal{P}(\theta)$ with sectorial bound $2K_{A(\widetilde{\tau})+c_{0},\theta}$ and also $A(\tau)+c_{0} \in \mathcal{R}(\theta)$ with $R$-sectorial bound $2R_{A(\widetilde{\tau})+c_{0},\theta}$, which provides a contradiction. Hence, the set $\cup_{\xi\in[0,r]}\{R_{A(\xi)+c_{0},\theta}\}$ is bounded. Therefore, we conclude that there exists some $C_{0}\geq1$ such that for each $\xi\in[0,r]$ we have $A(\xi)+c_{0}\in\mathcal{R}(\theta)$ with $R$-sectorial bound equal to $C_{0}$. If $c_{0}< C_{0}$, then due to \cite[Lemma 2.6]{RS3} we can replace $c_{0}$ and $C_{0}$ by $C_{0}(1+\frac{2}{\sin(\theta)})$.

Let $T>0$ and for each $\xi\in [0,r]$ denote
$$
M_{\xi}=\inf \{M\, |\, \text{ \eqref{maxreqineq} holds with respect to $A(\xi)$ with the choice $c=R= c_{0}$}\}.
$$
Assume that there exists a sequence $\{\zeta_{j}\}_{j\in\mathbb{N}}$ in $[0,r]$ such that $M_{\zeta_{j}}\rightarrow \infty$ as $j\rightarrow \infty$. By possibly passing to a subsequence, we may assume that $\zeta_{j}\rightarrow \widetilde{\zeta}\in [0,r]$ as $j\rightarrow \infty$. Let $N\in\mathbb{N}$ such that $j\geq N$ implies 
$$
2e^{c_{0}T}M_{\widetilde{\zeta}}\|A(\widetilde{\zeta})-A(\zeta_{j})\|_{\mathcal{L}(X_{1},X_{0})}<1. 
$$
Let $u_{0}\in(X_{1},X_{0})_{\frac{1}{q},q}$, $f\in L^{q}(0,T;X_{0})$ and denote by $u_{\widetilde{\zeta}}$ and $u_{\zeta_{j}}$ the solution of \eqref{app1}-\eqref{app2} with respect to $A(\widetilde{\zeta})$ and $A(\zeta_{j})$ respectively. We have that
\begin{eqnarray*}
(u_{\widetilde{\zeta}}-u_{\zeta_{j}})'(t)+A(\widetilde{\zeta})(u_{\widetilde{\zeta}}-u_{\zeta_{j}})(t)&=&(A(\zeta_{i})-A(\widetilde{\zeta}))u_{\zeta_{j}}(t), \quad t\in(0,T),\\
(u_{\widetilde{\zeta}}-u_{\zeta_{j}})(0)&=&0.
\end{eqnarray*}
Therefore by \eqref{maxreqineq} applied to $A(\widetilde{\zeta})$ we estimate
\begin{eqnarray*}
\lefteqn{\|u_{\zeta_{j}}\|_{W^{1,q}(0,T;X_{0})\cap L^{q}(0,T;X_{1})}}\\
&\leq& \|u_{\widetilde{\zeta}}\|_{W^{1,q}(0,T;X_{0})\cap L^{q}(0,T;X_{1})}+\|u_{\widetilde{\zeta}}-u_{\zeta_{j}}\|_{W^{1,q}(0,T;X_{0})\cap L^{q}(0,T;X_{1})}\\
&\leq& e^{c_{0}T}M_{\widetilde{\zeta}}(\|f\|_{L^{q}(0,T;X_{0})})+\|u_{0}\|_{(X_{1},X_{0})_{\frac{1}{q},q}})\\
&&+e^{c_{0}T}M_{\widetilde{\zeta}}\|A(\widetilde{\zeta})-A(\zeta_{j})\|_{\mathcal{L}(X_{1},X_{0})}\|u_{\zeta_{j}}\|_{L^{q}(0,T;X_{1})},
\end{eqnarray*}
and a contradiction follows.
\end{proof}

\noindent
{\em Proof of Theorem \ref{expth}.}
Denote $B=A(u(t_{1}))$,
$$
Q(\cdot)=Bu(t_{1}+\cdot)-A(u(t_{1}+\cdot))u(t_{1}+\cdot)+F(u(t_{1}+\cdot),t_{1}+\cdot)+G(t_{1}+\cdot) \in L^{q}(0,t_{2}-t_{1};X_{0})
$$
and consider the problem
\begin{eqnarray}\label{AAaqpp11}
\eta'(t)+B\eta(t)&=&Q(t),\quad t\in(0,t_{2}-t_{1}),\\\label{AAaqpp22}
\eta(0)&=&u(t_{1}).
\end{eqnarray}
The above system has a solution in $W^{1,q}(0,t_{2}-t_{1};X_{0})\cap L^{q}(0,t_{2}-t_{1};X_{1})$ given by $u(t_{1}+\cdot)$. Moreover, (H1), (H3) and \eqref{intemb} imply the existence and uniqueness of $\eta\in W^{1,q}(0,t_{2}-t_{1};X_{0})\cap L^{q}(0,t_{2}-t_{1};X_{1})$ solving \eqref{AAaqpp11}-\eqref{AAaqpp22}. Therefore, $u(t)=\eta(t-t_{1})$ for each $t\in[t_{1},t_{2})$. As a consequence, by \eqref{solution} we deduce that $u(t)=v_{t_{1}}(t)+w_{t_{1}}(t)$, $t\in[t_{1},t_{2})$, where
$$
v_{t_{1}}(t)=e^{(t_{1}-t)B}u(t_{1}) \quad \text{and} \quad w_{t_{1}}(t)=\int_{0}^{t-t_{1}}e^{(t_{1}+s-t)B}Q(s)ds, \quad t\in[t_{1},t_{2}).
$$

According to \cite[(3.26)]{Tan} and \cite[Theorem 3.3.4]{Tan} there exists a unique 
$$
\rho\in C([0,\infty);X_{0})\cap C^{1}((0,\infty);X_{0})\cap C((0,\infty);\mathcal{D}(B))
$$
solving 
\begin{eqnarray*}
\rho'(t)+B\rho(t)&=&0,\quad t>0,\\
\rho(0)&=&u(t_{1}),
\end{eqnarray*}
which satisfies $\rho(t)=v_{t_{1}}(t_{1}+t)$, $t\geq0$. Due to maximal $L^q$-regularity of $B$ we also have
$$
v_{t_{1}}\in W^{1,q}(t_{1},T;X_{0})\cap L^{q}(t_{1},T;X_{1})\hookrightarrow C([t_{1},T];(X_{1},X_{0})_{\frac{1}{q},q}),
$$
where we have used \eqref{intemb}. Denote by $\Gamma_{\theta}$ the counterclockwise oriented boundary of the sector $S_{\theta}$. By \cite[Proposition 3.1.9 (i)]{ABHN} we have that $B+c$ generates an analytic semigroup which by \cite[3.46]{ABHN} or \cite[(3.26)]{Tan} is represented as
$$
e^{-z(B+c)}=\frac{1}{2\pi i}\int_{\Gamma_{\theta}}e^{z\lambda}(B+\lambda)^{-1}d\lambda, \quad z\in \Lambda_{0}(\theta-\pi/2),
$$
and in addition satisfies $e^{-zB}=e^{cz}e^{-z(B+c)}$, $z\in \Lambda_{0}(\theta-\pi/2)$, where $c$ is as in (H3). By Cauchy's theorem, see, e.g., the proof of \cite[Proposition 2.1.1]{Lun}, for each $k\in\mathbb{N}\cup\{0\}$ we have that 
$$
 \Lambda_{0}(\pi/2-\theta)\ni z\mapsto e^{-z(B+c)}=\frac{1}{2\pi i}\int_{\Gamma_{\theta}}(-\lambda)^{k}e^{z\lambda}(B+c)^{-k}(B+c+\lambda)^{-1}d\lambda\in \mathcal{L}(X_{0},\mathcal{D}(B^{k}))
$$
and moreover the above map is analytic. 

The difference $\eta-\rho$ satisfies
\begin{eqnarray}\label{diff1}
(\eta-\rho)'(t)+B(\eta-\rho)(t)&=&Q(t), \quad t\in(0,t_{2}-t_{1}),\\\label{diff2}
(\eta-\rho)(0)&=&0.
\end{eqnarray}
Therefore by the maximal $L^{q}$-regularity inequality \eqref{maxreqineq} we obtain 
$$
\|\eta-\rho\|_{W^{1,q}(0,t_{2}-t_{1};X_{0})\cap L^{q}(0,t_{2}-t_{1};X_{1})} \leq e^{c_{0}(t_{2}-t_{1})}M_{0}\|Q\|_{L^{q}(0,t_{2}-t_{1};X_{0})}
$$
and \eqref{inequv} follows by \eqref{intemb}; note that the data determining the spectral shift $c_{0}$ and the bound $M_{0}$ is described in Lemma \ref{MMA}.

If in addition (H4) is satisfied, then
$$
Q(\cdot)=F(u(t_{1}+\cdot),t_{1}+\cdot) \in C([0,t_{2}-t_{1}];(X_{0},X_{1})_{\phi,p}),
$$
so that, by extending $Q(\cdot)$ to $(t_{2}-t_{1},\infty)$ by constant, from \cite[(2.10)]{Ro2} applied to \eqref{AAaqpp11}-\eqref{AAaqpp22} we obtain \eqref{contreg}. Finally, \eqref{diffineq} is obtained by \cite[(2.9)]{Ro2} applied to \eqref{diff1}-\eqref{diff2}. \mbox{\ }\hfill$\square$

\section{Cone differential operators and the Laplacian on manifolds with conical singularities}

In this section we explain the parts (i) and (ii) in Remark \ref{Remarkexp}. We start by recalling some basic facts concerning the calculus of cone differential operators. For further details we refer to \cite{SS1}, \cite{SS} and to the references therein.

\begin{definition}[Cone differential operators]
A {\em cone differential operator} $A$ of order $\mu\in\mathbb{N}\cup\{0\}$ is an $\mu$-th order differential operator with smooth coefficients in $\mathbb{B}^{\circ}$ such that when it is restricted to the collar part $(0,1)\times\partial\mathcal{B}$ it admits the form 
$$
A=x^{-\mu}\sum_{k=0}^{\mu}a_{k}(x)(-x\partial_{x})^{k}, \quad \mbox{where} \quad a_{k}\in C^{\infty}([0,1);\mathrm{Diff}^{\mu-k}(\partial\mathcal{B})), \quad k\in\{0,...,\mu\}.
$$
\end{definition}

Let $(\xi,\zeta)$ be the covariables corresponding to the local coordinates $(x,y)\in [0,1)\times \partial\mathcal{B}$ near the boundary. Beyond its usual homogenous principal symbol $\sigma_{\psi}(A)\in C^{\infty}(T^{\ast}\mathbb{B}^{\circ}\backslash\{0\})$, the {\em rescaled symbol} of a cone differential operator $A$ is defined by 
$$
\widetilde{\sigma}_{\psi}(A)(y,\zeta,\xi)=\sum_{k=0}^{\mu}\sigma_{\psi}(a_{k})(0,y,\zeta)(-i\xi)^{k}\in C^{\infty}((T^{\ast}\partial\mathbb{B}\times\mathbb{R})\backslash\{0\}).
$$
Moreover, the {\em conormal symbol} of $A$ is defined to be the following family of differential operators on the boundary
$$
\mathbb{C}\ni \lambda \mapsto \sigma_{M}(A)(\lambda)=\sum_{k=0}^{\mu}a_{k}(0)\lambda^{k} \in \mathcal{L}(H_{2}^{\mu}(\partial\mathbb{B}),H_{2}^{0}(\partial\mathbb{B})),
$$
where $H_{p}^{s}(\partial\mathbb{B})$, $s\in\mathbb{R}$, $p\in(1,\infty)$, denotes again the usual Sobolev space. Note that due to \cite[(2.13)]{SS}, for a cone differential operator $B$ of order $\nu$ we have
$$
\sigma_{M}(AB)(\lambda)=\sigma_{M}(A)(\lambda+\nu)\sigma_{M}(B)(\lambda).
$$
The notion of ellipticity is extended to the case of conically degenerate differential operators as follows. 

\begin{definition}[$\mathbb{B}$-ellipticity]
A cone differential operator $A$ is called {\em $\mathbb{B}$-elliptic} if $\sigma_{\psi}(A)$ and $\widetilde{\sigma}_{\psi}(A)$ are pointwise invertible.
\end{definition}

We consider the cut-off function $\omega$ in the description of \eqref{dommaxexp} as a $C^{\infty}(\mathbb{B})$-function by extending it by zero. Moreover, denote by $C_{c}^{\infty}$ the space of smooth compactly supported functions. 

\begin{definition}[Mellin-Sobolev spaces]
For any $\gamma\in\mathbb{R}$ consider the map 
$$
M_{\gamma}: C_{c}^{\infty}(\mathbb{R}_{+}\times\mathbb{R}^{n})\rightarrow C_{c}^{\infty}(\mathbb{R}^{n+1}) \quad \mbox{defined by} \quad u(x,y)\mapsto e^{(\gamma-\frac{n+1}{2})x}u(e^{-x},y). 
$$
Furthermore, take a covering $\kappa_{j}:U_{j}\subseteq\partial\mathcal{B} \rightarrow\mathbb{R}^{n}$, $j\in\{1,...,N\}$, $N\in\mathbb{N}$, of $\partial\mathcal{B}$ by coordinate charts and let $\{\phi_{j}\}_{j\in\{1,...,N\}}$ be a subordinated partition of unity. For any $p\in(1,\infty)$ and $s,\gamma\in\mathbb{R}$ let $\mathcal{H}^{s,\gamma}_p(\mathbb{B})$ be the space of all distributions $u$ on $\mathbb{B}^{\circ}$ such that 
$$
\|u\|_{\mathcal{H}^{s,\gamma}_p(\mathbb{B})}=\sum_{j=1}^{N}\|M_{\gamma}(1\otimes \kappa_{j})_{\ast}(\omega\phi_{j} u)\|_{H^{s}_p(\mathbb{R}^{n+1})}+\|(1-\omega)u\|_{H^{s}_p(\mathbb{B})}
$$
is defined and finite, where $\ast$ refers to the push-forward of distributions. The space $\mathcal{H}^{s,\gamma}_p(\mathbb{B})$, called {\em (weighted) Mellin-Sobolev space}, is independent of the choice of the cut-off function $\omega$, the covering $\{\kappa_{j}\}_{j\in\{1,...,N\}}$ and the partition $\{\phi_{j}\}_{j\in\{1,...,N\}}$.
\end{definition}

Note that since the usual Sobolev spaces are UMD, by \cite[Theorem III.4.5.2]{Am}, the Mellin-Sobolev spaces are also UMD. Moreover, if $s\in \mathbb{N}\cup\{0\}$, then $\mathcal{H}^{s,\gamma}_p(\mathbb{B})$ is the space of all functions $u$ in $H^s_{p,loc}(\mathbb{B}^\circ)$ such that
$$
x^{\frac{n+1}2-\gamma}(x\partial_x)^{k}\partial_y^{\alpha}(\omega(x) u(x,y)) \in L_{loc}^{p}\big([0,1)\times \partial \mathcal{B}, \sqrt{\mathrm{det}[h(x)]}\frac{dx}xdy\big),\quad k+|\alpha|\le s.
$$

Cone differential operators act naturally on scales of weighted Mellin-Sobolev spaces, i.e. such an operator $A$ of order $\mu$ induces a bounded map
$$
A: \mathcal{H}^{s+\mu,\gamma+\mu}_p(\mathbb{B}) \rightarrow \mathcal{H}^{s,\gamma}_p(\mathbb{B}) \quad \text{for all} \quad p\in(1,\infty) \quad \text{and}\quad s,\gamma\in\mathbb{R}.
$$
However, we regard $A$ as an unbounded operator in $\mathcal{H}^{s,\gamma}_p(\mathbb{B})$, $p\in(1,\infty)$, $s,\gamma\in\mathbb{R}$, with domain $C_{c}^{\infty}(\mathbb{B}^{\circ})$. In the case of $A$ being $\mathbb{B}$-elliptic, the domain of its minimal extension $\underline{A}_{s,\min}$ is given by 
$$
\mathcal{D}(\underline{A}_{s,\min})=\Big\{u\in \bigcap_{\varepsilon>0}\mathcal{H}^{s+\mu,\gamma+\mu-\varepsilon}_p(\mathbb{B}) \, |\, Au\in \mathcal{H}^{s,\gamma}_p(\mathbb{B})\Big\}.
$$ 
If in addition the conormal symbol of $A$ is invertible on the line $\{\lambda\in\mathbb{C}\,|\, \mathrm{Re}(\lambda)= \frac{n+1}{2}-\gamma-\mu\}$, then we have precisely
$$
\mathcal{D}(\underline{A}_{s,\min})=\mathcal{H}^{s+\mu,\gamma+\mu}_p(\mathbb{B}).
$$ 

The domain of the maximal extension $\underline{A}_{s,\max}$ of $A$, defined as usual by
$$
\mathcal{D}(\underline{A}_{s,\max})=\Big\{u\in\mathcal{H}^{s,\gamma}_p(\mathbb{B}) \, |\, Au\in \mathcal{H}^{s,\gamma}_p(\mathbb{B})\Big\},
$$
is expressed as in \eqref{dommaxexp}. The set of exponents $\rho$ describing $\mathcal{E}_{A,\gamma}$ in \eqref{dommaxexp} coincides with the finite set of points $Q_{A,\gamma}=Z_{A}\cap I_{\mu,\gamma}$. Here 
\begin{gather}\label{strip}
I_{\mu,\gamma}=\Big\{\lambda\in\mathbb{C}\, |\, \mathrm{Re}(\lambda)\in \Big[\frac{n+1}{2}-\gamma-\mu,\frac{n+1}{2}-\gamma\Big)\Big\}
\end{gather}
and $Z_{A}$ is a set of points in $\mathbb{C}$ that is determined explicitly by the poles of the recursively defined family of symbols
$$
g_{0}=f_{0}^{-1}, \quad g_{k}=-(T^{-k}f_{0}^{-1})\sum_{j=0}^{k-1}(T^{-j}f_{k-j})g_{j}, \quad k\in\{1,...,\mu-1\}, \quad \mu>1,
$$
where 
$$
f_{\nu}(\lambda)=\frac{1}{\nu!}\sum_{j=0}^{\mu}(\partial_{x}^{\nu}a_{j})(0)\lambda^{j}, \quad \nu\in\{0,...,\mu-1\}, \quad \lambda\in\mathbb{C},
$$
and $T^{\sigma}$, $\sigma\in\mathbb{R}$, denotes the action $(T^{\sigma}f)(\lambda)=f(\lambda+\sigma)$ (see, e.g., \cite[Section 3]{SS1} or \cite[(2.7)-(2.8)]{SS}). The logarithmic exponents $\eta$ in the description of $\mathcal{E}_{A,\gamma}$ are related to the orders of the above poles.

As a particular example of a cone differential operator, we consider the Laplacian associated to \eqref{metric}. On $(0,1)\times\partial\mathcal{B}$ it has the conically degenerate form
$$
\Delta=\frac{1}{x^{2}}\Big((x\partial_{x})^{2}+\Big(n-1+\frac{x\partial_{x}\det[h(x)]}{2\det[h(x)]}\Big)(x\partial_{x})+\Delta_{h(x)}\Big),
$$
where $\Delta_{h(x)}$ is the Laplacian on $\partial\mathcal{B}$ induced by the metric $h(x)$. $\Delta$ is a second order $\mathbb{B}$-elliptic cone differential operator whose conormal symbol is given by
\begin{gather}\label{consymb}
\sigma_{M}(\Delta)(\lambda) = \lambda^{2}-(n-1)\lambda + \Delta_{h(0)}, \quad \lambda\in\mathbb{C}.
\end{gather}
Therefore, if $\{\lambda_{j}\}_{j\in\mathbb{N}\cup\{0\}}$ are the eigenvalues of $\Delta_{h(0)}$, the poles of $(\sigma_{M}(\Delta)(\cdot))^{-1}$ coincide with the set
$$
\Big\{\frac{n-1}2\pm \sqrt{\Big(\frac{n-1}2\Big)^{2}-\lambda_j}\, |\, j\in\mathbb{N}\cup\{0\}\Big\}.
$$
If $\gamma\in(\frac{n-3}{2},\frac{n+1}{2})$, then pole zero is always contained in the strip \eqref{strip}. In this case, denote by $\mathbb{C}_{\omega}$ the subspace of $\mathcal{E}_{\Delta,\gamma}$ corresponding to $\rho=\eta=0$ and $c|_{\partial\mathcal{B}_{j}}=c_{j}$, $c_{j}\in\mathbb{C}$, $j\in\{1,...,\ell\}$, endowed with the norm $\|\cdot\|_{\mathbb{C}_{\omega}}$ defined by $c\mapsto \|c\|_{\mathbb{C}_{\omega}}=(\sum_{j=1}^{\ell}|c_{j}|^{2})^{\frac{1}{2}}$, i.e. $\mathbb{C}_{\omega}$ consists of smooth functions that are locally constant close to the boundary. Under some further restriction on the weight $\gamma$, such a realization satisfies the property of maximal $L^q$-regularity.

\begin{theorem}[{\rm \cite[Theorem 4.1]{Ro2} or \cite[Theorem 6.7]{SS1}}]\label{RsecD}
Let $p\in(1,\infty)$, $s\geq0$ and 
$$
\gamma\in\Big(\frac{n-3}2,\min\Big\{-1+\sqrt{\Big(\frac{n-1}{2}\Big)^{2}-\lambda_{1}} ,\frac{n+1}{2}\Big\}\Big),
$$
where $\lambda_{1}$ is the greatest non-zero eigenvalue of the boundary Laplacian $\Delta_{h(0)}$. Consider the closed extension $\underline{\Delta}_{s}$ of $\Delta$ in 
$$
X_{0}^{s}=\mathcal{H}_{p}^{s,\gamma}(\mathbb{B}) \quad \text{with domain} \quad X_{1}^{s}=\mathcal{D}(\underline{\Delta}_{s})=\mathcal{D}(\underline{\Delta}_{s,\min})\oplus\mathbb{C}_{\omega}=\mathcal{H}_{p}^{s+2,\gamma+2}(\mathbb{B})\oplus\mathbb{C}_{\omega}.
$$ 
Then, for any $\theta\in[0,\pi)$ there exists some $c>0$ such that $c-\underline{\Delta}_{s}$ is $R$-sectorial of angle $\theta$.
\end{theorem}

\begin{remark}
Maximal $L^q$-regularity for a general $\mathbb{B}$-elliptic cone differential operator can be obtained by \cite[Theorem 5.2]{SS1} or \cite[Theorem 4.3]{SS}.
\end{remark}

\subsection*{Integer powers of the Laplacian}

Let $k\in\mathbb{N}$, $k\geq2$, and let $\underline{\Delta}_{s}$ be the Laplacian from Theorem \ref{RsecD}. We have
\begin{gather}\label{DD22}
\mathcal{D}(\underline{\Delta}_{s}^{k})=\mathcal{D}(\underline{\Delta}_{s,\min}^{k})\oplus\mathbb{C}_{\omega}\oplus\mathcal{F}_{k},
\end{gather}
where the integer powers are defined as usual by \eqref{DAk}.

The minimal domain satisfies 
$$
\mathcal{H}_{p}^{s+2k,\gamma+2k}(\mathbb{B})\hookrightarrow \mathcal{D}(\underline{\Delta}_{s,\min}^{k})\hookrightarrow \mathcal{H}_{p}^{s+2k,\gamma+2k-\varepsilon}(\mathbb{B})
$$ 
for all $\varepsilon>0$. Therefore, if $s+2k>\frac{n+1}{p}$ then, due to \cite[Lemma 3.2]{RS3}, $\mathcal{D}(\underline{\Delta}_{s,\min}^{k})\hookrightarrow C(\mathbb{B}^{\circ})$ and for any $u\in\mathcal{D}(\underline{\Delta}_{s,\min}^{k})$ in local coordinates $(x,y)\in(0,1)\times\partial\mathcal{B}$ near the boundary we have
\begin{gather}\label{DD33}
|u(x,y)|\leq L x^{\gamma+2k-\frac{n+1}{2}-\varepsilon}\|u\|_{\mathcal{H}_{p}^{s+2k,\gamma+2k-\varepsilon}(\mathbb{B})} \quad \text{for all} \quad \varepsilon>0,
\end{gather}
with some constant $L>0$ depending only on $\mathbb{B}$ and $p$. 

$\mathcal{F}_{k}$ is an asymptotics space part as in \eqref{dommaxexp}. The set of exponents $\rho$ describing $\mathcal{F}_{k}$ coincides with the finite set of points $Q_{k}\subseteq Z_{k}\cap S_{k}$, where
$$
S_{k}=\Big\{\lambda\in\mathbb{C}\, |\, \mathrm{Re}(\lambda)\in \Big[\frac{n+1}{2}-\gamma-2k,\frac{n+1}{2}-\gamma-2\Big)\Big\} 
$$
and $Z_{k}$ is a set of points in $\mathbb{C}$ that is determined explicitly by $\partial\mathcal{B}$, the family of metrics $h(\cdot)$ from \eqref{metric} and $k$. Moreover, $Q_{k}\subseteq Q_{k-1}\cup V_{k}$ with $Q_{1}=\emptyset$ and a finite set of points $V_{k}$ satisfying
$$
V_{k}\subset \Big\{\lambda\in\mathbb{C}\, |\, \mathrm{Re}(\lambda)\in \Big[\frac{n+1}{2}-\gamma-2k,\frac{n+1}{2}-\gamma-2(k-1)\Big]\Big\}.
$$
Therefore, we also have $\mathcal{F}_{k}\subset \mathcal{H}_{p}^{s+2,\gamma+2}(\mathbb{B})$. 

In particular if the metric $h$ is independent of $x$ when $x$ is close to $0$, then $Q_{k}$ is a subset of 
\begin{gather}
S_{k}\cap \bigcup_{\nu\in\{0,...,k-1\}} \bigcup_{ \lambda_{j}\in\sigma(\Delta_{h(0)})} \Big\{-2\nu+\frac{n-1}2\pm \sqrt{\Big(\frac{n-1}2\Big)^2-\lambda_{j}}\Big\},
\end{gather}
and
\begin{gather}\label{DD23}
\mathcal{F}_{k}=\bigoplus_{\rho\in Q_{k}}\mathcal{F}_{k,\rho}.
\end{gather}
Here, for each $\rho$, $\mathcal{F}_{k,\rho}$ is a finite dimensional space consisting of linear combinations of $C^{\infty}(\mathbb{B}^\circ)$ functions that vanish on $\mathcal{B}\backslash([0,1)\times\partial\mathcal{B})$ and in local coordinates $(x,y)$ on the collar part they are of the form $\omega(x)c(y)x^{-\rho}\log^{\eta}(x)$ with $c\in C^{\infty}(\partial\mathbb{B})$ and $\eta\in\{0,...,\eta_{\rho}\}$, where $\eta_{\rho}\in\mathbb{N}\cup\{0\}$ is the order of $\rho$ as a pole of $(\sigma_{M}(\Delta^{k})(\cdot))^{-1}$, where 
$$
\mathbb{C}\ni\lambda \mapsto \sigma_{M}(\Delta^{k})(\lambda)=\prod_{\nu\in\{k-1,...,0\}} \sigma_{M}(\Delta)(2\nu+\lambda) \in \mathcal{L}(H_{2}^{2k}(\partial\mathbb{B}),H_{2}^{0}(\partial\mathbb{B}))
$$
with $\sigma_{M}(\Delta)(\cdot)$ given by \eqref{consymb}.

\section{Applications}

\subsection{The porous medium equation on manifolds with conical singularities}

The {\em porous medium equation} (PME) is the parabolic diffusion equation 
\begin{eqnarray}\label{e1}
u'(t)-\Delta(u^{m}(t))&=&0,\quad t>0,\\\label{e2}
u(0)&=&u_{0},
\end{eqnarray}
where the scalar function $u$ is a density distribution, $\Delta$ is a (negative) Laplacian and $m>0$ is a fixed parameter. The evolution described by the above equation, models the flow of a gas in a porous medium. The problem \eqref{e1}-\eqref{e2} has been extensively studied in various domains and in many aspects. For a detailed introduction to the theory of PME we refer to V\'azquez \cite{Va}. 

Concerning the case of manifolds with conical singularities, in \cite[Theorem 1.1]{RS4} it has been shown smoothness and long time existence for the solutions of PME. By changing variables in \eqref{e1}-\eqref{e2} we obtain the equivalent problem 
\begin{eqnarray}\label{v1}
w'(t)-mw^{\frac{m-1}{m}}(t)\Delta(w(t))&=&0,\quad t>0,\\\label{v2}
w(0)&=&w_{0}.
\end{eqnarray}
Similarly to \cite[Theorem 1.1]{RS4}, we have the following.

\begin{proposition}[Solutions of \eqref{v1}-\eqref{v2}]\label{Prop1}
Denote by $\lambda_{1}$ the greatest nonzero eigenvalue of $\Delta_{h(0)}$ and choose $p,q\in(1,\infty)$ sufficiently large such that 
$$
\frac{2}{q}<-\frac{n-1}{2}+\sqrt{\Big(\frac{n-1}2\Big)^2-\lambda_1} \quad \text{and} \quad \frac{n+1}{p}+\frac{2}{q}<1.
$$
Let
$$
s_{0}>\max\Big\{-1+\frac{n+1}{p}+\frac{2}{q},-\frac{2}{q}\Big\},\quad \gamma\in \Big(\frac{n-3}{2}+\frac{2}{q},\min\Big\{-1+\sqrt{\Big(\frac{n-1}2\Big)^2-\lambda_1} ,\frac{n+1}2\Big\}\Big)
$$
and
$$
u_{0}\in(\mathcal{H}_{p}^{s_{0}+2,\gamma+2}(\mathbb{B})\oplus\mathbb{C}_{\omega},\mathcal{H}_{p}^{s_{0},\gamma}(\mathbb{B}))_{\frac{1}{q},q}\hookleftarrow \bigcup_{\nu>0}\mathcal{H}_{p}^{s_{0}+2-\frac{2}{q}+\nu,\gamma+2-\frac{2}{q}+\nu}(\mathbb{B})\oplus\mathbb{C}_{\omega}
$$
satisfying $u_{0}\geq\alpha$, for some $\alpha>0$. Then:\\
{\bf (i)} For each $T>0$ there exists a unique 
\begin{gather}\label{lpregw}
w\in W^{1,q}(0,T;\mathcal{H}_{p}^{s_{0},\gamma}(\mathbb{B})) \cap L^{q}(0,T;\mathcal{H}_{p}^{s_{0}+2,\gamma+2}(\mathbb{B})\oplus\mathbb{C}_{\omega})
\end{gather}
solving \eqref{v1}-\eqref{v2} with initial value $w_{0}=u_{0}^{m}$.\\
{\bf (ii)} In addition we have that
\begin{eqnarray}\label{wreg1}
 && w \in C([0,T], (\mathcal{H}_{p}^{s_{0}+2,\gamma+2}(\mathbb{B})\oplus\mathbb{C}_{\omega},\mathcal{H}_{p}^{s_{0},\gamma}(\mathbb{B}))_{\frac{1}{q},q})\\\label{wreg2}
&&\cap \, C([0,T];\mathcal{H}_{p}^{s_{0}+2-\frac{2}{q}-\varepsilon,\gamma+2-\frac{2}{q}-\varepsilon}(\mathbb{B})\oplus\mathbb{C}_{\omega})\\\label{wreg3}
&&\cap\, C^{\delta}([0,T];\mathcal{H}_{p}^{s_{0}+2-\frac{2}{q}-2\delta,\gamma+2-\frac{2}{q}-2\delta}(\mathbb{B})\oplus\mathbb{C}_{\omega}) \cap C^{\delta}((0,T];\mathcal{H}_{p}^{s,\gamma+2-\frac{2}{q}-2\delta}(\mathbb{B})\oplus\mathbb{C}_{\omega})\\\label{wreg4}
&&\cap\, C^{1+\delta}([0,T];\mathcal{H}_{p}^{s_{0}-\frac{2}{q}-2\delta,\gamma-\frac{2}{q}-2\delta}(\mathbb{B}))\cap C^{1+\delta}((0,T];\mathcal{H}_{p}^{s,\gamma-\frac{2}{q}-2\delta}(\mathbb{B}))\\\label{wreg5}
&&\cap \, C^{k}((0,T];\mathcal{H}_{p}^{s,\gamma-2(k-1)}(\mathbb{B})) \cap C((0,T];\mathcal{H}_{p}^{s,\gamma+2}(\mathbb{B})\oplus\mathbb{C}_{\omega})
\end{eqnarray}
for all $k\in\mathbb{N}$, $s>0$, $\varepsilon>0$ and 
\begin{gather}\label{deltachoice}
\delta\in\Big(0,\frac{1}{2}\min\Big\{2-\frac{n+1}{p}-\frac{2}{q},\gamma-\frac{n-3}{2}-\frac{2}{q}\Big\}\Big).
\end{gather}
{\bf (iii)} For each $T>0$ we have $w=u^{m}$ on $[0,T]\times\mathbb{B}$, where $u$ is the solution of \eqref{e1}-\eqref{e2} with initial data $u_{0}$ given by \cite[Theorem 1.1]{RS4}. In particular, if $c_{0}\leq u_{0}^{m}\leq c_{1}$ on $\mathbb{B}$ for suitable constants $c_{0},c_{1}>0$, then also $c_{0}\leq w\leq c_{1}$ on $[0,T]\times\mathbb{B}$ for each $T>0$.
\end{proposition}
\begin{proof} 
First note that the assumption $u_{0}\geq\alpha$ makes sense due to \cite[Lemma 5.2]{RS3} and \cite[Lemma 3.2]{RS3}. By \eqref{intemb}, \cite[Lemma 5.2]{RS3}, \cite[(5.29)]{RS4} and \cite[(5.32)]{RS4}, for each $T>0$ there exists a unique $w$ as in \eqref{lpregw} solving \eqref{v1}-\eqref{v2} with initial value $w_{0}=u_{0}^{m}$, which moreover satisfies \eqref{wreg1}, \eqref{wreg2} and \eqref{wreg3} for all $s>0$, $\varepsilon>0$ and $\delta$ as in \eqref{deltachoice}, and also \eqref{wreg5} with $k=1$. Moreover, due to \cite[Theorem 1.1]{RS4}, \cite[Remark 2.12]{RS4} and the comparison principle \cite[Theorem 4.3]{RS4} we have that $w=u^{m}$ on $[0,T]\times\mathbb{B}$. In addition, by \cite[Lemma 2.5]{RS4} $w^{\frac{m-1}{m}}$ satisfies \eqref{wreg2} and the right hand side of \eqref{wreg5}. Therefore, by \eqref{v1} and \cite[Corollary 3.3]{RS3} $w$ satisfies \eqref{wreg4}.

By \cite[Theorem 1.1]{RS4} we have that $\partial_{t}^{2}u\in C((0,T];\mathcal{H}_{p}^{s,\gamma-2}(\mathbb{B}))$ and for $s$ sufficiently large
$$
u^{m-1}, u^{m-2}\in C((0,T];\mathcal{H}_{p}^{s,\gamma+2}(\mathbb{B})\oplus\mathbb{C}_{\omega})
$$
due to \cite[Lemma 2.5]{RS4}. Hence, if $\chi$ is a smooth function on $\mathbb{B}^{\circ}$ such that $\chi\geq\frac{1}{4}$ on $\mathcal{B}\backslash([0,\frac{1}{2})\times\partial\mathcal{B})$ and $\chi=x^{2}$ on $[0,\frac{1}{2})\times\partial\mathcal{B}$, then by
$$
\partial_{t}^{2}w=mu^{m-1}\partial_{t}^{2}u+m(m-1)u^{m-2}(\chi\partial_{t}u)(\chi^{-1}\partial_{t}u)
$$ 
and \cite[Lemma 2.5]{RS4} we deduce that $w\in C^{2}((0,T];\mathcal{H}_{p}^{s,\gamma-2}(\mathbb{B}))$. The assertion then follows by iteration. 
\end{proof}

The above result combined with Theorem \ref{expth} provides the following expansion for the solutions of PME. 

\begin{proposition}[Expanding solutions of PME]\label{pmexp}
Let $p$, $q$, $s_{0}$, $\gamma$, $u_{0}$ be chosen as in \cite[Theorem 1.1]{RS4} and let $u$ be the unique solution of the porous medium equation \eqref{e1}-\eqref{e2} according to this theorem. For any $t_{1},\mu,\xi,\varepsilon>0$ there exist $t_{2}>t_{1}$ and 
\begin{eqnarray*}
\lefteqn{v\in C([t_{1},t_{2}];C(\mathbb{B}))\cap \bigcap_{s,\nu>0} \mathcal{A}(\Lambda_{t_{1}}(\pi/2);\mathcal{D}(\underline{\Delta}_{s}^{2}))}\\
&&\cap \, W^{1,q}(t_{1},t_{2};\mathcal{H}_{p}^{s,\gamma}(\mathbb{B})) \cap L^{q}(t_{1},t_{2};\mathcal{H}_{p}^{s,\gamma+2}(\mathbb{B})\oplus\mathbb{C}_{\omega}) \cap C([t_{1},t_{2}];\mathcal{H}_{p}^{s,\gamma+2-\frac{2}{q}-\nu}(\mathbb{B})\oplus\mathbb{C}_{\omega})
\end{eqnarray*}
such that 
\begin{eqnarray*}
\lefteqn{\|u^{m}-v\|_{W^{1,q}(t_{1},t_{2};\mathcal{H}_{p}^{\mu,\gamma}(\mathbb{B})) \cap L^{q}(t_{1},t_{2};\mathcal{H}_{p}^{\mu,\gamma+2}(\mathbb{B})\oplus\mathbb{C}_{\omega})}}\\
&&+\|u^{m}-v\|_{C([t_{1},t_{2}];\mathcal{H}_{p}^{\mu,\gamma+2-\frac{2}{q}-\xi}(\mathbb{B})\oplus\mathbb{C}_{\omega})}+\|u^{m}-v\|_{C([t_{1},t_{2}];C(\mathbb{B}))}<\varepsilon
\end{eqnarray*}
and $u^{m}(t_{1})=v(t_{1})$. Here $\mathcal{D}(\underline{\Delta}_{s}^{2})$ is described by \eqref{DD22} and provides the required expansion for $u^{m}$. Moreover, if $t_{1}\in [\tau_{1},\tau_{2}]$, $0<\tau_{1}<\tau_{2}<\infty$, then $t_{2}-t_{1}$ is only determined by $u$, $\mu$, $\xi$, $\varepsilon$, $\tau_{1}$ and $\tau_{2}$. 
\end{proposition}
\begin{proof}
Let $s>\frac{n+1}{p}$. We apply Theorem \ref{expth} to \eqref{v1}-($w(0)=u^{m}(\tau_{1})$), with $X_{0}=X_{0}^{s}$, $X_{1}=X_{1}^{s}$ and $A(\cdot)=-m(\cdot)^{\frac{m-1}{m}}\underline{\Delta}_{s}$. For any $T>\tau_{1}$, let $w$ be the unique solution of \eqref{v1}-\eqref{v2} on $[0,T]\times\mathbb{B}$ according to Proposition \ref{Prop1}. By \cite[Lemma 2.5]{RS4} we have that $\eta=w^{\frac{m-1}{m}}\in C([\tau_{1},\tau_{2}];X_{1}^{s})$. Also, due to \cite[Corollary 3.3]{RS3}, elements in $X_{1}^{s}$ act by multiplication as bounded maps on $X_{0}^{s}$, so that for each $\xi\in [\tau_{1},\tau_{2}]$, $A(w(\xi)):X_{1}^{s}\rightarrow X_{0}^{s}$ is well defined and furthermore $A(w(\cdot))\in C([\tau_{1},\tau_{2}];\mathcal{L}(X_{1}^{s},X_{0}^{s}))$. Finally, by \cite[Theorem 6.1]{RS3}, for each $\xi\in [\tau_{1},\tau_{2}]$ and each $\theta\in [0,\pi)$ there exists a $c>0$ such that $A(w(\xi))+c\in\mathcal{R}(\theta)$.

We conclude that there exist $t_{2}\in (t_{1}, \tau_{2}]$ and $v$ having the required properties, where we have also used \cite[Lemma 3.2]{RS3} and \cite[Lemma 5.2]{RS3}. In particular, $v(t)\in \mathcal{D}((\eta(t_{1})\underline{\Delta}_{s})^{2})$, $t\in(t_{1},t_{2})$, implies that $v(t)\in \mathcal{D}(\underline{\Delta}_{s})$ and $\eta(t_{1})\underline{\Delta}_{s}v(t)\in \mathcal{D}(\underline{\Delta}_{s})$. Moreover, by Proposition \ref{Prop1} we have $c_{0}^{\frac{m-1}{m}}\leq \eta\leq c_{1}^{\frac{m-1}{m}}$ on $[\tau_{1},\tau_{2}]\times\mathbb{B}$, for certain $c_{0},c_{1}>0$. Therefore, by \cite[Lemma 2.5]{RS4} we deduce that $\Delta v(t)\in \mathcal{D}(\underline{\Delta}_{s})$, which implies that $v(t)\in\mathcal{D}(\underline{\Delta}_{s}^{2})$, $t\in(t_{1},t_{2})$.
\end{proof}

\begin{remark}
In the case of the heat equation, i.e. when $m=1$, Proposition \ref{pmexp} can be improved to \cite[Theorem 4.3]{Ro3}. In this situation, the asymptotics space expansion of $u$ can be chosen arbitrary long, i.e. $u$ is analytic in time with values in $\mathcal{D}(\underline{\Delta}_{s}^{k})$ from \eqref{DD22}, for each $k\in\mathbb{N}$.
\end{remark}

\subsection{The Swift-Hohenberg equation on manifolds with conical singularities}

We consider the following problem
\begin{eqnarray}\label{CH1}
u'(t)+(\Delta+1)^{2}u(t)&=&V(u(t),t), \quad t\in(0,T),\\ \label{CH2}
u(0)&=&u_{0},
\end{eqnarray}
called {\em Swift-Hohenberg equation} (SHE), where $u$ is a scalar field, $\Delta$ is a (negative) Laplacian, $V(x,s)$ is a polynomial in $x$ with $s$-dependent complex valued coefficients that are locally Lipschitz continuous in $\mathbb{R}$ and $T>0$ is finite. The above equation has various applications in physics; it can model e.g. thermally convecting fluid flows \cite{SH}, cellular flows \cite{PM} as well as phenomena in optical physics \cite{TGM}. It is also well known for its pattern formation under evolution.

In \cite{Ro2} the problem \eqref{CH1}-\eqref{CH2} was considered on manifolds with conical singularities. In \cite[Theorem 4.1]{Ro2} it has been shown existence and maximal regularity of the short time solution. In addition, in \cite[Theorem 4.4]{Ro2} a necessary and sufficient condition was found such that the above solution exists for all times. By combining the results in \cite{Ro2} with \cite[Theorem 3.1]{RS4} we obtain the following smoothness for the solutions of \eqref{CH1}-\eqref{CH2}.

\begin{proposition}[Smoothness for solutions of SHE]
Let $p$, $q$, $s$, $\gamma$, $u_{0}$ be chosen as in \cite[Theorem 4.1]{Ro2} and let $u$ be the unique solution of the Swift-Hohenberg equation \eqref{CH1}-\eqref{CH2} on $[0,T]\times\mathbb{B}$, for some $T>0$, according to this theorem. Then, in addition to \cite[(4.21)-(4.22)]{Ro2} we have
$$
u\in \bigcap_{\nu\geq0} C^{1}((0,T);\mathcal{H}_{p}^{\nu,\gamma}(\mathbb{B}))\cap C((0,T);\mathcal{D}(\underline{\Delta}_{\nu}^{2})),
$$
where the bi-Laplacian domain is also described by \eqref{DD22}.
\end{proposition}
\begin{proof}
By \cite[Theorem 4.1]{Ro2} it suffices to show that for each $\delta\in (0,T)$ we have
\begin{gather}\label{instreg}
u\in \bigcap_{\nu\geq0}W^{1,q}(\delta,T;\mathcal{H}_{p}^{\nu,\gamma}(\mathbb{B}))\cap L^{q}(\delta,T;\mathcal{D}(\underline{\Delta}_{\nu}^{2})).
\end{gather}
To this end we apply \cite[Theorem 3.1]{RS4} to \eqref{CH1}-\eqref{CH2}. By \cite[(4.23)]{Ro2} for each $\nu\geq0$ and each $\theta\in[0,\pi)$ there exists a $c>0$ such that the operator 
\begin{gather}\label{maxregforbilap}
(\underline{\Delta}_{\nu}+1)^{2}+c:\mathcal{D}(\underline{\Delta}_{\nu}^{2})\rightarrow \mathcal{H}_{p}^{\nu,\gamma}(\mathbb{B}) 
\end{gather}
belongs to $\mathcal{R}(\theta)$, so that by Theorem \ref{KWth} it has maximal $L^{q}$-regularity. Moreover, by \cite[(4.30)]{Ro2}
\begin{gather}\label{Vreg}
V(v(\cdot),\cdot)\in \bigcap_{\varepsilon>0}C([0,T];\mathcal{H}_{p}^{\nu+2-\frac{2}{q}-\varepsilon,\gamma+2-\frac{2}{q}-\varepsilon}(\mathbb{B})\oplus\mathbb{C}_{\omega})
\end{gather}
whenever
$v\in C([0,T]; (\mathcal{D}(\underline{\Delta}_{\nu}^{2}),\mathcal{H}_{p}^{\nu,\gamma}(\mathbb{B}))_{\frac{1}{q},q}$.

Fix $\rho>\frac{1}{2}\max\{\frac{1}{q-1},\frac{1}{2}\}$ and consider the Banach scales $Y_{0}^{j}=\mathcal{H}_{p}^{s+\frac{j}{\rho q},\gamma}(\mathbb{B})$ and $Y_{1}^{j}=\mathcal{D}(\underline{\Delta}_{s+\frac{j}{\rho q}}^{2})$, $j\in\mathbb{N}\cup\{0\}$. By \cite[Lemma 7.2]{RSS} or \cite[Theorem 3.3]{Ro3} for each $j\in\mathbb{N}\cup\{0\}$ we have $Y_{1}^{j}\hookrightarrow(Y_{1}^{j+1},Y_{0}^{j+1})_{\frac{1}{q},q}$. Moreover, by \eqref{Vreg}, $V(v(\cdot),\cdot)\in L^{q}(0,T;Y_{0}^{j+1})$ whenever $v\in C([0,T];(Y_{1}^{j},Y_{0}^{j})_{\frac{1}{q},q})$, $j\in\mathbb{N}\cup\{0\}$. Therefore, by \cite[Theorem 3.1]{RS4} with $Z=(Y_{1}^{0},Y_{0}^{0})_{\frac{1}{q},q}$ and $F=V$ we obtain \eqref{instreg}.
\end{proof}

By combining Theorem \ref{expth}, \eqref{intemb}, \eqref{instreg}, \eqref{maxregforbilap}, \eqref{Vreg}, \cite[(4.24)]{Ro2} and \cite[(4.31)]{Ro2} we obtain the following. 

\begin{proposition}[Expanding solutions of SHE]\label{shexp}
Let $p$, $q$, $s$, $\gamma$, $u_{0}$ be chosen as in \cite[Theorem 4.1]{Ro2} and let $u$ be the unique solution of the Swift-Hohenberg equation \eqref{CH1}-\eqref{CH2} on $[0,T]\times\mathbb{B}$, for some $T>0$, according to this theorem. Then, for any $t_{1},\mu,\xi,\varepsilon>0$ there exist $t_{2}>t_{1}$ and 
\begin{eqnarray*}
\lefteqn{v\in C([t_{1},t_{2}];C(\mathbb{B}))\cap \bigcap_{r,k,\nu>0} \mathcal{A}(\Lambda_{t_{1}}(\pi/2);\mathcal{D}(\underline{\Delta}_{r}^{k}))}\\
&&\cap \, W^{1,q}(t_{1},t_{2};\mathcal{H}_{p}^{r,\gamma}(\mathbb{B})) \cap L^{q}(t_{1},t_{2};\mathcal{D}(\underline{\Delta}_{r}^{2})) \cap C([t_{1},t_{2}];\mathcal{H}_{p}^{r,\gamma+2-\frac{2}{q}-\nu}(\mathbb{B})\oplus\mathbb{C}_{\omega})
\end{eqnarray*}
such that 
\begin{eqnarray*}
\lefteqn{\|u-v\|_{W^{1,q}(t_{1},t_{2};\mathcal{H}_{p}^{\mu,\gamma}(\mathbb{B})) \cap L^{q}(t_{1},t_{2};\mathcal{D}(\underline{\Delta}_{\mu}^{2}))}}\\
&&+\|u-v\|_{C([t_{1},t_{2}];\mathcal{H}_{p}^{\mu,\gamma+2-\frac{2}{q}-\xi}(\mathbb{B})\oplus\mathbb{C}_{\omega})}+\|u-v\|_{C([t_{1},t_{2}];C(\mathbb{B}))}<\varepsilon
\end{eqnarray*}
and $u(t_{1})=v(t_{1})$. Here $\mathcal{D}(\underline{\Delta}_{r}^{k})$, $k\in\mathbb{N}$, is described by \eqref{DD22} and provides the required expansion for $u$. If in addition $t_{1}\in [\tau_{1},\tau_{2}]$, $0<\tau_{1}<\tau_{2}<\infty$, then $t_{2}-t_{1}$ is only determined by $u$, $\mu$, $\xi$, $\varepsilon$, $\tau_{1}$ and $\tau_{2}$, and for each $\alpha\in (0,\frac{1}{2}-\frac{1}{2q})$ we have
$$
\|(u-v)(t_{2})\|_{\mathcal{D}(\underline{\Delta}_{\mu}^{2})}\leq C(t_{2}-t_{1})^{\alpha}
$$
for some constant $C>0$ that only depends on $\alpha$, $q$, $u$, $\mu$, $\tau_{1}$ and $\tau_{2}$.
\end{proposition}

\begin{remark}
The fact that the approximation $v$ of $u$ in Proposition \ref{shexp} has values in $\mathcal{D}(\underline{\Delta}_{s}^{k})$ for arbitrary $k\in\mathbb{N}$ allows us to choose an arbitrary long expansion for $u$ according to \eqref{DD22}. Furthermore, $q$ in Proposition \ref{shexp} can be chosen arbitrary large and hence $\alpha$ arbitrary close to $1/2$.
\end{remark}

\subsection{The case of closed manifolds}

Let $\mathcal{M}$ be a smooth, closed and connected $(n+1)$-dimensional Riemannian manifold, endowed with a Riemannian metric $f$ and let $\mathbb{S}^{n}$ be the unit sphere $\{z\in\mathbb{R}^{n+1}\, |\, |z|=1\}$. Fix a point $o\in \mathcal{M}$ and denote by $x=d(o,z)$ the geodesic distance between $o$ and $z\in \mathcal{M}\setminus\{o\}$, where $d$ is the metric distance induced by $f$. There exists an $r>0$ such that $(x,y)\in (0,r)\times \mathbb{S}^{n}$ are local coordinates around $o$ and moreover the metric in these coordinates becomes 
$$
f=dx^{2}+x^{2}f_{\mathbb{S}^{n}}(x),
$$
where $x\mapsto f_{\mathbb{S}^{n}}(x)$ is a smooth family of Riemannian metrics on $\mathbb{S}^{n}$ (see, e.g., \cite[Lemma 5.5.7]{Pa}). Assuming that $f_{\mathbb{S}^{n}}(\cdot)$ is smooth up to $x=0$ and also does not degenerate up to this point, we can regard $((\mathcal{M}\backslash\{o\})\cup(\{0\}\times \mathbb{S}^{n}),f)$ as a conic manifold with one isolated conical singularity at the pole $o$. From this point of view, the results of this section are applied to the porous medium equation and the Swift-Hohenberg equation on $(\mathcal{M},f)$ respectively. Note that in both cases the solution is well defined on $\mathcal{M}$ since it is a time dependent constant on $\{0\}\times \mathbb{S}^{n}$.


\begin{thebibliography}{99}

\bibitem{Am3} H. Amann. {\em Dynamic theory of quasilinear parabolic equations. I. Abstract evolution equations}. Nonlinear Anal. {\bf 12}, no. 9, 895--919 (1988).

\bibitem{Am} H. Amann. {\em Linear and quasilinear parabolic problems. Vol. I Abstract linear theory}. Monographs in Mathematics {\bf 89}, Birkh\"auser Verlag (1995).

\bibitem{ABHN} W. Arendt, C. Batty, M. Hieber, F. Neubrander. {\em Vector-valued Laplace transforms and Cauchy problems}. Monographs in Mathematics {\bf 96}, Birkh\"auser Verlag (2001).

\bibitem{CL} P. Cl\'ement, S. Li. {\em Abstract parabolic quasilinear equations and application to a groundwater flow problem}. Adv. Math. Sci. Appl. {\bf 3}, Special Issue, 17--32 (1993/94).

\bibitem{DG} G. Da Prato, P. Grisvard. {\em Sommes d' op\'erateurs lin\'eaires et \'equations diff\'erentielles op\'erationnelles}. J. Math. Pures Appl. (9) {\bf 54}, no. 3, 305--387 (1975).

\bibitem{DS} L. De Simon. {\em Un'applicazione della teoria degli integrali singolari allo studio delle equazioni differenziali lineari astratte del primo ordine}. Rend. Sem. Mat. Univ. Padova {\bf 34}, 205--223 (1964).

\bibitem{Dore} G. Dore. {\em $L^{p}$ regularity for abstract differential equations}. Functional Analysis and Related Topics, Lecture Notes in Math. {\bf 1540}, 25--38, Springer Verlag (1993).

\bibitem{DV} G. Dore, A. Venni. {\em On the closedness of the sum of two closed operators}. Math. Z. {\bf 196}, no. 2, 189--201 (1987).

\bibitem{KaW} N. Kalton, L. Weis. {\em The $H^{\infty}$-calculus and sums of closed operators}. Math. Ann. {\bf 321}, no. 2, 319--345 (2001).

\bibitem{Lun} A. Lunardi. {\em Analytic semigroups and optimal regularity in parabolic problems}. Progress in Nonlinear Differential Equations and Their Applications {\bf 16}, Birkh\"auser Verlag (1995).

\bibitem{Pa} P. Petersen. {\em Riemannian geometry}. Graduate Texts in Mathematics {\bf 171}, Springer Verlag (2016).

\bibitem{PM} Y. Pomeau, P. Manneville. {\em Wavelength selection in cellular flows}. Phys. Lett. A {\bf 75}, no. 4, 296--298 (1980).

\bibitem{PrSi} J. Pr\"uss, G. Simonett. {\em Moving interfaces and quasilinear parabolic evolution equations}. Monographs in Mathematics {\bf 105}, Birkh\"auser Verlag (2016).

\bibitem{Ro3} N. Roidos. {\em Complex powers for cone differential operators and the heat equation on manifolds with conical singularities}. Proceedings of the Amer. Math. Soc. {\bf 146}, no. 7, 2995--3007 (2018).

\bibitem{Ro2} N. Roidos. {\em The Swift-Hohenberg equation on conic manifolds}. J. Math. Anal. Appl. {\bf481}, no 2, 123491 (2020).

\bibitem{RS3} N. Roidos, E. Schrohe. {\em Existence and maximal $L^p$-regularity of solutions for the porous medium equation on manifolds with conical singularities}. Comm. Partial Differential Equations {\bf 41}, no. 9, 1441--1471 (2016).

\bibitem{RS4} N. Roidos, E. Schrohe. {\em Smoothness and long time existence for solutions of the porous medium equation on manifolds with conical singularities}. Comm. Partial Differential Equations {\bf 43}, no. 10, 1456--1484 (2018).

\bibitem{RSS} N. Roidos, E. Schrohe, J. Seiler. {\em Bounded $H_{\infty}$-calculus for boundary value problems on manifolds with conical singularities}. [arXiv:1906.03701].

\bibitem{SS1} E. Schrohe, J. Seiler. {\em Bounded $H_{\infty}$-calculus for cone differential operators}. J. Evol. Equ. {\bf 18}, no 3, 1395--1425 (2018).

\bibitem{SS} E. Schrohe, J. Seiler. {\em The resolvent of closed extensions of cone differential operators}. Canad. J. Math. {\bf 57}, no. 4, 771--811 (2005).

\bibitem{SH} J. Swift, P. Hohenberg. {\em Hydrodynamic fluctuations at the convective instability}. Phys. Rev. A {\bf 15}, no. 1, 319--328 (1977).

\bibitem{Tan} H. Tanabe. {\em Equations of evolution}. Monographs and Studies in Mathematics {\bf 6}, Pitman Publishing (1979).

\bibitem{TGM} M. Tlidi, M. Georgiou, P. Mandel. {\em Transverse patterns in nascent optical bistability}. Phys. Rev. A {\bf 48}, no. 6, 4506--4609 (1993).

\bibitem{Va} J. L. V\'azquez. {\em The porous medium equation, mathematical theory}. Oxford Mathematical Monographs, Oxford University Press (2007).

\bibitem{Weis} L. Weis. {\em Operator-valued Fourier multiplier theorems and maximal $L_{p}$-regularity}. Math. Ann. {\bf 319}, no. 4, 735--758 (2001).

\end{thebibliography}
\end{document}